\renewcommand\@biblabel[1]{\textbf{#1.}} 
\renewcommand{\@listI}{\itemsep=0pt} 
\renewcommand{\maketitle}{ 
\begin{center}
{\LARGE\@title} 

\vspace{35pt} 

{\large\@author} 
\\\@date 

\vspace{20pt} 
\end{center}
}
\title{\textbf{The asymptotically flat scalar-flat Yamabe problem with boundary}}
\author{\textsc{Stephen\hspace{-1.5mm}\newlength{\Mheight}
\newlength{\cwidth}
\settoheight{\Mheight}{M}\settowidth{\cwidth}{c}M\parbox[b][\Mheight][t]{\cwidth}{c}\hspace{0.15mm}Cormick\footnote{stephen.mccormick@une.edu.au}}
\\{\textit{School of Science and Technology\\University of New England\\Armidale, 2351\\Australia}}} 
\newtheorem{theorem}{Theorem}[section]
\newtheorem{corollary}[theorem]{Corollary}
\newtheorem{lemma}[theorem]{Lemma}
\newtheorem{prop}[theorem]{Proposition}
\newtheorem{remark}[theorem]{Remark}
\theoremstyle{plain}
\newtheorem*{prop*}{Proposition}
\newtheorem*{theorem*}{Theorem}
\newcommand{\onabla}{\mathring\nabla}
\newcommand{\og}{\mathring g}
\newcommand{\R}{\mathbb{R}}
\newcommand{\W}{\overline{W}}
\newcommand{\bH}{\overline{H}}
\numberwithin{equation}{section}
\DeclareMathOperator{\ran}{ran}
\begin{document}
\maketitle
\begin{abstract}
We consider two cases of the asymptotically flat scalar-flat Yamabe problem on a non-compact manifold with boundary, in dimension $n\geq3$. First, following arguments of Cantor and Brill in the compact case, we show that given an asymptotically flat metric $g$, there is a conformally equivalent asymptotically flat scalar-flat metric that agrees with $g$ on the boundary. We then replace the metric boundary condition with a condition on the mean curvature: Given a function $f$ on the boundary that is not too large, we show that there is an asymptotically flat scalar-flat metric, conformally equivalent to $g$ whose boundary mean curvature is given by $f$. The latter case involves solving an elliptic PDE with critical exponent using the method of sub- and supersolutions. Both results require the usual assumption that the Sobolev quotient is positive.
\end{abstract}

\section{Introduction}
The Yamabe problem, asked  in its original form, for which compact Riemannian manifolds $(\mathcal M,g)$ does there exist a metric conformal to $g$ with constant scalar curvature \cite{trudinger1968remarks,yamabe1960deformation}. Since the problem was first posed, it has branched out into several related problems; prescribed scalar curvature \cite{kazdanwarner}, the case where $\mathcal M$ has a boundary \cite{escobar1992yamabe}, or is non-compact \cite{CantorBrill1981}, non-compact with boundary \cite{schwartz2006zero}, constant mean curvature boundary \cite{escobar1992conformal}, prescription of boundary mean curvature \cite{EscobarprescribedH}, and others (see \cite{brendle2010recent,yamabe} and references therein).

The non-compact case seems to have only recently been receiving an amount of attention due to its relation to the Einstein constraint equations in general relativity. The Yamabe problem on non-compact manifolds is intimately connected to the conformal method of solving the constraint equations (see, for example, \cite{BaIsConstraints2004,maxwell2005solutions}); in the time-symmetric case, the constraint equations simply reduce to the prescribed scalar curvature equation. Indeed, we are also motivated to study the Yamabe problem on an asymptotically flat manifold with boundary due to this connection to the constraint equations; however, rather than seeking to parametrise the space of solutions, we are motivated by the problem of quasilocal mass (see \cite{QLMreview} for an in-depth review of the problem). Bartnik's quasilocal mass gives the mass of a bounded region as the infimum of the ADM mass, over a space of admissible asymptotically flat extensions to the given region \cite{qlm}. It is expected that this infimum will be achieved by a static metric that is continuous across the boundary and induces the same mean curvature of the boundary, although it is not expected that this solution will be differentiable across the boundary. For this reason, one is interested in finding scalar-flat asymptotically flat solutions with boundary, such that the metric and mean curvature are both fixed on the boundary. In the literature, this boundary data is often called Bartnik data, or Bartnik's geometric boundary data.

Unfortunately, the Yamabe problem with both of these boundary conditions imposed is overdetermined, so we cannot expect to use this method to find vacuum solutions with prescribed Bartnik boundary data. However, it does motivate the following two results. We demonstrate the existence of scalar-flat metrics in a fixed conformal class satisfying either of these boundary conditions separately, but not both simultaneously. In the following two Propositions, we assume $(\mathcal M,g)$ is an asymptotically flat manifold with boundary and positive Sobolev quotient.

\begin{prop*}[See Corollary \ref{cor1} for formal statement]
There exists an asymptotically flat scalar-flat metric on $\mathcal M$, conformal to $g$ and agreeing with $g$ on $\partial \mathcal M$.
\end{prop*}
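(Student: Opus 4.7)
The plan is to write the conformal factor as $\tilde g = u^{4/(n-2)} g$ and reduce the problem to a linear elliptic boundary value problem for $u$. Setting $a_n = 4(n-1)/(n-2)$, the requirement $R_{\tilde g} = 0$ together with $\tilde g|_{\partial \mathcal{M}} = g|_{\partial \mathcal{M}}$ and asymptotic flatness of $\tilde g$ becomes
\begin{equation*}
-a_n \Delta_g u + R_g u = 0 \text{ in } \mathcal{M}, \qquad u = 1 \text{ on } \partial \mathcal{M}, \qquad u \to 1 \text{ at infinity.}
\end{equation*}
Following Cantor and Brill, I would substitute $u = 1 + v$ to convert this to the homogeneous-boundary problem
\begin{equation*}
Lv := -a_n \Delta_g v + R_g v = -R_g \text{ in } \mathcal{M}, \qquad v = 0 \text{ on } \partial \mathcal{M}, \qquad v \to 0 \text{ at infinity.}
\end{equation*}
Since $g$ is asymptotically flat, $R_g$ decays fast enough that the right-hand side lives in an appropriate weighted Sobolev (or weighted H\"older) space on $\mathcal{M}$.

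Next I would set up the functional-analytic framework: let $L$ act between weighted Sobolev spaces $W^{k+2,p}_{-\delta}(\mathcal{M})$ and $W^{k,p}_{-\delta-2}(\mathcal{M})$ (with Dirichlet trace on $\partial \mathcal{M}$) for an admissible decay weight $\delta \in (0, n-2)$. Standard elliptic theory for asymptotically flat manifolds gives that $L$ is Fredholm of index zero on such spaces, and standard elliptic boundary regularity handles the compact boundary piece. It remains to show $L$ is injective on this space; once injectivity holds, Fredholmness of index zero yields the required existence and uniqueness of $v$.

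The main obstacle is precisely this injectivity, and this is where the positive Sobolev quotient hypothesis enters. Given $v \in \ker L$ with $v|_{\partial \mathcal{M}} = 0$ and $v \to 0$ at infinity, the decay is strong enough to integrate by parts and justify all boundary terms vanishing, giving
\begin{equation*}
\int_{\mathcal{M}} \bigl(a_n |\nabla v|^2 + R_g v^2 \bigr)\, dV_g = 0.
\end{equation*}
The positive Sobolev quotient assumption, together with density of $C^\infty_c(\mathcal{M} \setminus \partial \mathcal{M})$-type test functions in the weighted space (approximating $v$ by compactly supported functions vanishing on $\partial \mathcal{M}$), implies that the quadratic form $v \mapsto \int (a_n |\nabla v|^2 + R_g v^2)$ is coercive, so $v \equiv 0$. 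This yields the isomorphism property of $L$, hence existence of a unique solution $v$.

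Finally I would verify that $u = 1 + v$ is strictly positive, so that $\tilde g = u^{4/(n-2)} g$ is an honest Riemannian metric. Since $v$ decays at infinity and vanishes on $\partial \mathcal{M}$, and since $u$ satisfies $-a_n \Delta_g u + R_g u = 0$, a maximum principle argument (or an iterative refinement replacing $R_g$ by a suitable nonnegative representative using the positive Yamabe constant to first produce a pointwise positive conformal factor in the class) rules out $u \leq 0$. The decay rate of $v$ inherited from the weighted regularity theory is what ensures that $\tilde g$ remains asymptotically flat with the same asymptotic order as $g$.
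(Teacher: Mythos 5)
Your overall architecture matches the paper's: write $\tilde g=\phi^{4/(n-2)}g$, substitute $\phi=1+v$ to get the linear problem $\frac{4(n-1)}{n-2}\Delta_g v-Rv=R$ with vanishing Dirichlet data and decay at infinity, prove injectivity of the operator from positivity of the Sobolev quotient via integration by parts and density of compactly supported test functions, and then solve. Two points differ in substance. First, you obtain surjectivity by asserting that $L$ is ``Fredholm of index zero''; the paper never proves this. It only establishes the semi-Fredholm properties (finite-dimensional kernel, closed range) for $\Delta_g-\lambda R$, and then upgrades injectivity to surjectivity by a continuity method in $\lambda$ starting from the isomorphism $\Delta_g$ (Lemma \ref{leminjfam}). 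Your route can be made to work if you verify that multiplication by $R\in L^n_{-2-\epsilon}$ is a compact perturbation of $\Delta_g$ between the relevant weighted spaces, but as written this is an unproved claim rather than ``standard elliptic theory.''

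The genuine gap is the positivity of $u=1+v$. You invoke ``a maximum principle argument,'' but the equation is $\frac{4(n-1)}{n-2}\Delta_g u=Ru$ with $R$ of no definite sign, so no maximum principle applies directly to force $u>0$; this is exactly the delicate step (and the neighborhood of the error in Cantor--Brill that Maxwell pointed out). The paper's resolution is a homotopy argument: for each $\lambda\in[0,1]$ solve $A_\lambda\phi_\lambda=0$ with $\phi_\lambda\to1$, note $\phi_0\equiv1$ and that $\lambda\mapsto\phi_\lambda$ is continuous in $C^0$, so a first failure of positivity would produce $\phi_{\lambda_0}\geq0$ vanishing to first order at an interior point; the strong comparison/unique continuation theorem of Aleksandrov then forces $\phi_{\lambda_0}\equiv0$, a contradiction with the asymptotics. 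Your parenthetical alternative --- first conformally changing to a representative with controlled scalar curvature so that a maximum principle becomes available --- is a legitimate strategy in principle, but it is not developed, and in the asymptotically flat setting producing such a representative is essentially the theorem being proved. As it stands, the positivity step is asserted rather than proved, and it is the step where the na\"ive argument actually fails.
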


\begin{prop*}[See Corollary \ref{cor2} for formal statement]
There is a positive function $\rho$ on $\partial \mathcal M$, such that for any function $f<\rho$ on $\partial \mathcal M$, there exists an asymptotically flat scalar-flat metric on $\mathcal M$, conformal to $g$, such that $f$ is the mean curvature of the boundary with respect to the outer unit normal.
\end{prop*}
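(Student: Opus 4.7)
Setting $\tilde g = u^{4/(n-2)} g$, the prescribed mean curvature problem becomes the semilinear system
\begin{align*}
L_g u &= 0 \quad \text{in } \mathcal{M}, \\
\tfrac{\partial u}{\partial \nu} + \tfrac{n-2}{2} H_g u &= \tfrac{n-2}{2} f\, u^{n/(n-2)} \quad \text{on } \partial\mathcal{M},
\end{align*}
with $u-1$ in an appropriate weighted Sobolev space. My first move would be to apply Corollary~\ref{cor1} to reduce to the case $R_g=0$, so that the interior equation collapses to $\Delta_g u = 0$; only the coefficient $H_g$ is modified, and positivity of the Sobolev quotient is preserved. Because the boundary nonlinearity has the critical exponent $n/(n-2)$, variational methods fail through loss of compactness, and so, following the abstract, I would tackle the problem by sub- and supersolutions.

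For the supersolution I would solve a linear auxiliary Robin problem: find $u_+>0$ with $u_+\to 1$ at infinity satisfying $\Delta_g u_+=0$ in $\mathcal{M}$ and $\partial_\nu u_+ + \tfrac{n-2}{2}H_g u_+ = \tfrac{n-2}{2}\rho_0$ on $\partial\mathcal{M}$, for a fixed strictly positive function $\rho_0$. Existence of a positive $u_+$ in the correct weighted space comes from Fredholm theory together with a maximum principle, paralleling Corollary~\ref{cor1}; positivity of the Sobolev quotient is what rules out a nontrivial kernel. I would then define $\rho := \rho_0 \,u_+^{-n/(n-2)}$ on $\partial\mathcal{M}$, which is strictly positive, and observe that by construction $u_+$ is a supersolution of the nonlinear boundary condition whenever $f<\rho$.

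For the subsolution I would take $u_-=\epsilon w$, where $w>0$ solves a second linear auxiliary Robin problem whose boundary data are chosen with the opposite sign (e.g.\ $\partial_\nu w + \tfrac{n-2}{2} H_g w = -\tfrac{n-2}{2}\rho_1$ with $\rho_1>0$ small, $w\to 1$ at infinity). A direct computation shows that for $\epsilon$ sufficiently small the subsolution boundary inequality holds for any bounded $f$ and moreover $u_-\leq u_+$. With the ordered pair $u_-\leq u_+$ in hand, I would add a sufficiently large multiple $Mu$ to both sides of the boundary equation so the nonlinear right-hand side is monotone in $u$ on $[u_-,u_+]$, and then iterate: let $u_{k+1}$ be the unique solution of the resulting linear Robin problem with right-hand side built from $u_k$, starting at $u_0=u_+$. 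Monotonicity combined with the lower bound $u_-$ yields convergence, and weighted elliptic regularity passes this to a smooth asymptotically flat solution of the original nonlinear problem.

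The main obstacle I expect is not the usual bubbling associated with the critical exponent, since the $L^\infty$ sandwich $u_-\leq u_k\leq u_+$ already supplies the compactness required for the iteration to converge. The genuine technical work lies in the linear analysis underlying the sub- and supersolution constructions: proving solvability, the correct decay, and strict positivity of $u_+$ and $w$ on the non-compact manifold in the weighted Sobolev setting, and then verifying that the monotone iteration preserves membership in that setting. Both of these ultimately rest on the positive-Sobolev-quotient assumption and on the machinery already developed for Corollary~\ref{cor1}.
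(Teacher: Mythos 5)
Your overall strategy (sub- and supersolutions for the critical boundary nonlinearity, with the $L^\infty$ sandwich supplying compactness) is the same as the paper's, but your barrier construction has a genuine gap at the linear step. You propose to keep the $H_g$ term and obtain $u_+$ and $w$ by solving Robin problems of the form $\partial_\nu u + \tfrac{n-2}{2}H_g u = h$, asserting that solvability and strict positivity follow from Fredholm theory, the maximum principle, and positivity of the Sobolev quotient. This does not go through as stated: $H_g$ has no sign in general, so the Robin boundary operator can have the ``wrong'' sign for both the maximum principle and the Hopf lemma, and injectivity of the linear operator would require coercivity of the quadratic form $\int_{\mathcal M}|\nabla u|^2\,d\mu + \tfrac{n-2}{2}\oint_{\partial\mathcal M}H_g u^2\,dA$. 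The Sobolev quotient $Q(\mathcal M,g)$ used in this paper contains no boundary term, so it gives no control over $\oint H_g u^2$; the relevant invariant is the boundary-inclusive (Escobar/Maxwell) quotient, and establishing positivity of the resulting operator is precisely the nontrivial content you are implicitly assuming. In short, your reduction via Corollary \ref{cor1} removes $R$ but leaves you with a Robin problem whose solvability and positivity theory you have not actually supplied.

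The paper sidesteps this entirely by invoking Maxwell's theorem at the outset: positivity of the Yamabe invariant lets one conformally deform to a metric with $R\equiv 0$ \emph{and} $H\equiv 0$ simultaneously, so the boundary operator becomes pure Neumann, $\partial_\nu u = f u^{\beta}$. Both barriers are then produced from a single Dirichlet (not Robin) auxiliary problem: let $v$ be $g$-harmonic with $v\equiv 1$ on $\partial\mathcal M$ and $v\to 0$ at infinity, so that $0<v\leq 1$ and $\partial v/\partial\eta>0$ by the maximum principle and Hopf lemma, and set $u_{\pm}=1+(\alpha-1)v$ for suitable constants $\alpha$. Small $\alpha<1$ gives the subsolution; optimizing $\alpha>1$ in the inequality $f<\tfrac{\partial v}{\partial\eta}(\alpha-1)\alpha^{-\beta}$ gives the supersolution and the explicit threshold $\rho=\tfrac{\partial v}{\partial\eta}\left(\tfrac{1}{\beta-1}\right)^{1-\beta}\beta^{-\beta}$. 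Your monotone-iteration paragraph is essentially a re-proof of Maxwell's sub/supersolution theorem, which the paper simply cites. If you want to salvage your route, you must either prove coercivity of the Robin form under a boundary-inclusive Yamabe-type hypothesis, or perform the reduction to $H\equiv 0$ first as the paper does.
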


We prove the first of these results following an argument of Cantor and Brill \cite{CantorBrill1981}, where they prove the analogous result in the compact case. The second of these results uses the method of sub- and supersolutions, and relies heavily on excellent previous work of Maxwell where asymptotically flat solutions to the Einstein constraint equations with apparent horizon boundary conditions are constructed \cite{maxwell2005solutions}. This second result is also related to previous work by Schwartz \cite{schwartz2006zero}, which considered the scalar-flat Yamabe problem outside of a ball, however no assumptions about asymptotic flatness are made in this case considered by Schwartz.

\section{The setup}
Throughout, we use the framework of weighted Sobolev spaces to control the asymptotics. In this Section we recall some properties of the Laplace-Beltrami operator between these weighted spaces on an asymptotically flat manifold with boundary. Let $\mathcal M$ be a smooth manifold with boundary such that $\mathcal{M}$ minus a compact set $K$ containing the boundary ${\partial\mathcal M}$, is diffeomorphic to $\R^n$ minus a closed ball. Denote this diffeomorphism by $\phi:\mathcal M\setminus K\rightarrow\R^n\setminus \overline{B_1(0)}$. On $\mathcal{M}$, we fix some smooth background metric $\og$ that agrees with $\phi^*(\delta)$, the pullback of the Euclidean metric, on $\mathcal M\setminus K$. We also fix a smooth function $r(x)$ on $\mathcal M$ such that $r(x)\geq1$ and $r(x)=|\phi(x)|$ on $\mathcal M\setminus K$. We now recall the weighted Lebesgue and Sobolev norms:
\begin{align}
\left\|u\right\|_{p,\delta}&=
\left\{
\begin{array}{ll}
\left(\int_{\mathcal M}\left| u\right|^p r^{-\delta p-n}d\mu_0\right)^{1/p},& p<\infty,\\
\text{ess sup}(r^{-\delta}|u|), & p=\infty,
\end{array}\label{weighted1}
\right.
\\
\left\|u\right\|_{k,p,\delta}&=\sum_{j=0}^k\|\onabla^j u\|_{p,\delta-j},\label{weighted2}
\end{align}
where $\circ$ refers to quantities defined by $\og$. The spaces $L^p_\delta(\mathcal M)$ and $W^{k,p}_\delta(\mathcal M)$ are defined as the completion of the smooth functions with bounded support on $\mathcal M$ with respect to these norms, respectively. We follow the convention of \cite{AF}, where $\delta$ explicitly indicates the asymptotics; that is, if $u\in W^{k,p}_{\delta}$ then $u=o(r^\delta)$. Denote by $\overline{W}^{k,p}_\delta(\mathcal M)$, the completion of the compactly supported functions on $\mathcal M\setminus{\partial\mathcal M}$ with respect to the $W^{k,p}_\delta$ norm. That is, $\overline{W}^{k,p}_\delta(\mathcal M)$ is a space of functions that vanish on the the boundary in the trace sense, along with their first $k-1$ derivatives. We will generally omit reference to $\mathcal{M}$, and simply write $W^{k,p}_\delta$ for the sake of presentation. We say that $(\mathcal M,g)$ is an asymptotically flat manifold if $(g-\og)\in W^{k,2}_{5/2-n}$ with $k>n/2$, which ensures $g$ is H\"older continuous via the Sobolev-Morrey embedding. It is well-known that the usual Sobolev-type inequalities have weighted analogues, which we use throughout -- the reader is referred to Theorem 1.2 of \cite{AF} for an explicit statement of many of these inequalities.

It is also well-known that the Laplace-Beltrami operator is an isomorphism between these weighted spaces in the case where $\mathcal M$ has no boundary. This is also true in the case considered here, when Dirichlet boundary conditions are imposed. In particular, we make use of the following results (cf. \cite{McCormickExtensionsManifold}).

\begin{lemma}\label{lemineq}
Let $M$ be an asymptotically flat manifold with a compact interior boundary. For any $\delta\in(2-n,0)$ and $u\in W^{2,2}_\delta\cap\overline{W}^{1,2}_\delta$, we have
\begin{equation}
\|u\|_{2,2,\delta}\leq C\|\Delta_g u\|_{2,\delta-2}.\label{coercivelaplace}
\end{equation}
Furthermore, for any $\delta\in\R$ and $u\in W^{2,2}_\delta$, we have
\begin{equation}\label{nonfredholm}
\|u\|_{2,2,\delta}\leq C\left(\|\Delta_g u\|_{2,\delta-2}+\|u\|_{2,\delta}\right).
\end{equation}
\end{lemma}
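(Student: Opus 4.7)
My plan is to reduce both inequalities to standard weighted elliptic theory on $\R^n$ through a cutoff decomposition. Fix a large $R$ and a smooth cutoff $\chi_R$ vanishing on a neighborhood of $K$ and equal to $1$ on $\{r\geq 2R\}$, and split $u = u_\infty + u_K$ with $u_\infty = \chi_R u$ and $u_K = (1-\chi_R)u$. The exterior piece $u_\infty$, pushed forward via $\phi$, lives on $\R^n\setminus\overline{B_1(0)}$, where $(g-\og)\in W^{k,2}_{5/2-n}$ is controlled via Sobolev--Morrey embedding into a decaying H\"older space. This makes $\Delta_g - \Delta_{\og}$ have arbitrarily small operator norm $W^{2,2}_\delta\to L^2_{\delta-2}$ on $\{r\geq R\}$ as $R\to\infty$, so any flat estimate transfers to $\Delta_g$ by absorption.

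For (\ref{coercivelaplace}), the flat Laplacian is an isomorphism $W^{2,2}_\delta \to L^2_{\delta-2}$ for $\delta\in(2-n,0)$ (the range avoids the indicial roots $0$ and $2-n$), by the classical results of Nirenberg--Walker, Cantor, and Bartnik. Together with the perturbation step this controls $u_\infty$. The interior piece $u_K$ is supported in a bounded domain and inherits a Dirichlet trace on $\partial\mathcal M$ from the hypothesis $u\in\overline{W}^{1,2}_\delta$; standard elliptic regularity together with Poincar\'e's inequality then gives $\|u_K\|_{W^{2,2}}\leq C\|\Delta_g u_K\|_{L^2}$. Recombining produces the estimate up to a commutator error $\|[\Delta_g,\chi_R]u\|_{2,\delta-2}$, which is first order and supported in the annulus $\{R\leq r\leq 2R\}$. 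This error is absorbed via the usual compactness-contradiction argument: any normalised sequence $u_i$ violating the estimate would, by weighted Rellich compactness, converge in $L^2_\delta$ to a weakly harmonic $u_\infty\in W^{2,2}_\delta$ vanishing on $\partial\mathcal M$; integration by parts (legitimate since $\delta<0$ kills the boundary term at infinity) forces $\nabla u_\infty\equiv 0$ and hence $u_\infty\equiv 0$, contradicting its nontriviality in the limit.

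For (\ref{nonfredholm}) no indicial-root exclusion or boundary condition is required, at the price of losing the isomorphism and keeping the $\|u\|_{2,\delta}$ slack on the right. The same cutoff decomposition works: on the end one uses the non-Fredholm weighted estimate $\|u_\infty\|_{2,2,\delta}\leq C(\|\Delta_g u_\infty\|_{2,\delta-2}+\|u_\infty\|_{2,\delta})$ obtained by Bartnik's dyadic rescaling plus interior Calder\'on--Zygmund on a fixed annulus; on the compact part one applies plain interior elliptic regularity (without boundary data) $\|u_K\|_{W^{2,2}}\leq C(\|\Delta_g u_K\|_{L^2}+\|u_K\|_{L^2})$. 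The commutator term $[\Delta_g,\chi_R]u$ is now harmless because it is absorbed directly into the $\|u\|_{2,\delta}$ term already present on the right hand side. The principal technical obstacle in both parts is the quantitative smallness of $\Delta_g-\Delta_{\og}$ near infinity, which rests on H\"older decay of $g-\og$ and the product rule for weighted Sobolev spaces; everything else follows from textbook weighted elliptic theory on $\R^n$ together with compact-domain estimates.
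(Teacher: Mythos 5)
The paper offers no proof of this lemma at all --- it is imported from \cite{McCormickExtensionsManifold} with a ``cf.'' --- so there is no in-paper argument to compare yours against; what you propose for (\ref{coercivelaplace}) is the standard route and its architecture is sound: cutoff to the end, the flat isomorphism for the nonexceptional range $\delta\in(2-n,0)$, perturbation using the decay of $g-\og$ supplied by the Sobolev--Morrey embedding, the Dirichlet estimate on the compact core (which is exactly where the hypothesis $u\in\overline{W}^{1,2}_\delta$ enters), and a compactness--contradiction step to remove the commutator error. Two points need tightening. First, the weighted Rellich embedding $W^{2,2}_\delta\hookrightarrow L^2_{\delta'}$ is compact only for $\delta'>\delta$, so you should not claim convergence in $L^2_\delta$; instead note that your residual term lives on a fixed compact annulus, where ordinary Rellich applies. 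Second, ``$\delta<0$ kills the boundary term at infinity'' is too quick: for $\delta\in(1-n/2,0)$ the naive flux bound $o(r^{2\delta+n-2})$ does not tend to zero. You must first upgrade the decay of the limiting harmonic function (a decaying harmonic function on the end lies in $W^{2,2}_{2-n+\epsilon}$) before integrating by parts.

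The genuine gap is in your treatment of (\ref{nonfredholm}) on the compact part. ``Plain interior elliptic regularity'' controls $\|u_K\|_{W^{2,2}}$ only on subsets compactly contained in the interior; it gives nothing up to $\partial\mathcal M$, and without a boundary condition no estimate of the form $\|u_K\|_{W^{2,2}}\leq C\left(\|\Delta_g u_K\|_{L^2}+\|u_K\|_{L^2}\right)$ holds up to the boundary. Indeed, in half-ball boundary coordinates the cutoffs of the harmonic functions $e^{-kx_n}\sin(kx_1)$ satisfy
\begin{equation*}
\|v_k\|_{W^{2,2}}\sim k^{3/2},\qquad \|\Delta v_k\|_{L^2}+\|v_k\|_{L^2}=O(k^{1/2}),
\end{equation*}
and this transfers to $\Delta_g$ after shrinking the support so that $g$ is close to Euclidean. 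So the step fails as written, and in fact this indicates that (\ref{nonfredholm}) cannot be literally true for arbitrary $u\in W^{2,2}_\delta$ without either a boundary condition (e.g.\ restricting again to $\overline{W}^{1,2}_\delta$, or imposing a Neumann trace) or an extra boundary norm on the right-hand side; the content of the scale-broken estimate is the behaviour at infinity, and near $\partial\mathcal M$ something must be said. Since the paper only ever uses (\ref{coercivelaplace}) --- the estimate (\ref{LRscalebroken}) in Corollary \ref{corfR} is derived from it on the Dirichlet domain $\W^{1,2}_\delta\cap W^{2,2}_\delta$ --- this does not propagate into the later results, but your argument for the second inequality cannot be closed near the boundary as it stands.
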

\begin{prop}\label{propisolaplace}
For any $\delta\in(2-n,0)$, $\Delta_{g}:\W^{1,2}_\delta \cap W^{2,2}_\delta \rightarrow L^2_{\delta-2}$ is an isomorphism.
\end{prop}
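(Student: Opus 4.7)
The verification splits into boundedness, injectivity, and surjectivity. Boundedness is routine: in the asymptotic chart one has $\Delta_g = g^{ij}(\partial_i \partial_j - \Gamma^k_{ij}\partial_k)$, and the AF decay $g-\mathring g \in W^{k,2}_{5/2-n}$ with $k>n/2$ (which controls $g^{ij}$ and the Christoffel symbols in the appropriate weighted spaces) together with weighted product inequalities yields $\|\Delta_g u\|_{2,\delta-2} \leq C\|u\|_{2,2,\delta}$. Injectivity and closed range both drop out of the coercive estimate~(\ref{coercivelaplace}): a kernel element has $\|u\|_{2,2,\delta}=0$ and hence vanishes, while a sequence $\Delta_g u_n \to f$ in $L^2_{\delta-2}$ lifts via~(\ref{coercivelaplace}) to a sequence $u_n$ which is Cauchy in $\overline{W}^{1,2}_\delta \cap W^{2,2}_\delta$, whose limit is the sought preimage. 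All of this is essentially automatic given Lemma~\ref{lemineq}.

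The substantive step, and my main obstacle, is surjectivity. My first plan would be a Fredholm argument. The non-coercive estimate~(\ref{nonfredholm}), combined with the standard compactness of weighted Sobolev embeddings on asymptotically flat manifolds, identifies $\Delta_g$ as semi-Fredholm, and combining this with the coercive bound upgrades it to Fredholm with trivial kernel. To conclude, I would show the index is zero by continuously deforming $g$ to the reference metric $\mathring g$ through a path of asymptotically flat metrics, invoking homotopy invariance of the Fredholm index (the estimates of Lemma~\ref{lemineq} hold uniformly along this family). The model problem is then the Euclidean Laplacian on $\R^n \setminus \overline{B_1(0)}$ with Dirichlet data on the unit sphere and decay at infinity; here Kelvin inversion through the unit sphere, or a direct representation via the Newtonian potential plus a harmonic correction for the boundary values, confirms that the Dirichlet Laplacian is an isomorphism on the relevant weighted spaces for $\delta \in (2-n,0)$.

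As a fallback closer in spirit to the existence proofs in Cantor's original AF work, I would argue by exhaustion: for each $R$ large, solve the Dirichlet problem $\Delta_g u_R = f$ on the truncation $\mathcal M_R := \{x \in \mathcal{M} : r(x) \leq R\}$ (with zero data on both $\partial \mathcal M$ and $\{r=R\}$) by classical elliptic theory, apply~(\ref{coercivelaplace}) to a cutoff extension of $u_R$ to obtain a uniform $W^{2,2}_\delta$ bound independent of $R$, and extract a weakly convergent subsequence. Local elliptic regularity and the Dirichlet trace then promote the weak limit to a genuine solution $u \in \overline{W}^{1,2}_\delta \cap W^{2,2}_\delta$. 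The main difficulty in either route is the weighted compactness / index input; this is well-established on AF manifolds without boundary by work of Cantor, Bartnik, and Lockhart--McOwen, and the Dirichlet boundary condition does not interfere with the analysis near infinity where the Fredholm/index considerations are located.
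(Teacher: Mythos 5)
The paper does not actually prove this proposition: it is quoted, together with Lemma \ref{lemineq}, from \cite{McCormickExtensionsManifold}, so there is no internal argument to compare against. Your decomposition is nevertheless the right one, and the easy parts are handled correctly: boundedness follows from the weighted multiplication estimates, and injectivity together with closed range are immediate from (\ref{coercivelaplace}). (One small slip: the embedding $W^{2,2}_\delta\hookrightarrow L^2_\delta$ appearing implicitly in your use of (\ref{nonfredholm}) is \emph{not} compact -- weighted Rellich compactness requires a strictly larger target weight, as in the proof of Corollary \ref{corfR} -- but this is moot since (\ref{coercivelaplace}) already gives you semi-Fredholmness with trivial kernel.) As you say, the entire weight of the proposition rests on surjectivity.

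Both of your routes to surjectivity have a gap at the decisive point. In the index-deformation route, the endpoint of your homotopy is $\Delta_{\og}$ on $\mathcal M$, not the Euclidean Laplacian on $\R^n\setminus\overline{B_1(0)}$: only $\mathcal M\setminus K$ is identified with the exterior of a ball, while $K$ may carry nontrivial topology and an arbitrary smooth metric, so the Kelvin-inversion computation does not determine the index of your model operator without an additional excision or relative-index argument. In the exhaustion route, applying (\ref{coercivelaplace}) to a cutoff extension $\chi_R u_R$ produces the commutator terms $2\nabla\chi_R\cdot\nabla u_R+u_R\Delta\chi_R$ supported in the transition annulus; these are controlled only by the very norm you are trying to bound and carry no small constant, so the uniform estimate does not close -- one must instead prove the coercive estimate directly on the truncations $\mathcal M_R$ with a constant independent of $R$, as in Bartnik's treatment of exhaustions in \cite{AF}. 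The cleanest repair uses only what you already have: since the range is closed, surjectivity is equivalent to triviality of the annihilator of the range in $(L^2_{\delta-2})^*\cong L^2_{2-n-\delta}$. Integration by parts against test functions supported in the interior shows any annihilator $v$ is harmonic; Green's identity against general elements of the domain (which vanish on $\partial\mathcal M$ but have arbitrary normal derivative there) forces $v=0$ on $\partial\mathcal M$; and since $\delta\in(2-n,0)$ implies $2-n-\delta\in(2-n,0)$, the injectivity estimate (\ref{coercivelaplace}) at the dual weight gives $v\equiv 0$. This duality argument sidesteps both the model-problem computation and the exhaustion entirely.
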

We now use Lemma \ref{lemineq} to establish the following straightforward corollary.
\begin{corollary}\label{corfR}
Assume $\delta\in (2-n,0)$ and $f\in L^n_{-2-\epsilon}$, with $\epsilon>0$, then the operator ${L=\Delta_g+f:\W^{1,2}_\delta \cap W^{2,2}_\delta \rightarrow L^2_{\delta-2}}$ has finite dimensional kernel and closed range.
\end{corollary}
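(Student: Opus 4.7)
The plan is to view $L$ as a compact perturbation of the Laplacian and invoke Proposition~\ref{propisolaplace} together with the standard stability of the Fredholm property under compact perturbations. Since $\Delta_g : \W^{1,2}_\delta \cap W^{2,2}_\delta \to L^2_{\delta-2}$ is an isomorphism (hence Fredholm of index zero), it suffices to show that the multiplication operator $K : u \mapsto f u$, viewed from $W^{2,2}_\delta$ into $L^2_{\delta-2}$, is compact. The restriction of $K$ to the closed subspace $\W^{1,2}_\delta \cap W^{2,2}_\delta$ then inherits compactness, and $L = \Delta_g + K$ is Fredholm between the stated spaces, which yields both claims of the corollary.

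Boundedness of $K$ would be obtained from the weighted H\"older inequality with exponents $1/2 = 1/n + (n-2)/(2n)$, pairing $f \in L^n_{-2-\epsilon}$ with $u \in L^{2n/(n-2)}_{\delta+\epsilon}$ so that the weights add to $\delta-2$, together with the critical weighted Sobolev embedding $W^{2,2}_\delta \hookrightarrow L^{2n/(n-2)}_\delta$ from Theorem~1.2 of \cite{AF}. Because $\epsilon > 0$ and $r \geq 1$, the pointwise inequality $r^{-(\delta+\epsilon)p - n} \leq r^{-\delta p - n}$ gives the continuous inclusion $L^{2n/(n-2)}_\delta \hookrightarrow L^{2n/(n-2)}_{\delta+\epsilon}$, and concatenating yields
\[
\|f u\|_{2,\delta-2} \leq \|f\|_{n,-2-\epsilon}\,\|u\|_{2n/(n-2),\delta+\epsilon} \leq C\,\|f\|_{n,-2-\epsilon}\,\|u\|_{2,2,\delta}.
\]

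For the compactness upgrade, I would use a truncation argument. Fix a smooth cutoff $\chi_R$ that equals $1$ on $\{r \leq R\}$ and vanishes outside $\{r \leq 2R\}$, and decompose $K = K_R + E_R$ with $K_R u = (\chi_R f) u$ and $E_R u = ((1-\chi_R) f)u$. By the estimate just displayed, $\|E_R\| \leq C\,\|(1-\chi_R) f\|_{n,-2-\epsilon}$, which tends to zero as $R \to \infty$ by dominated convergence. The truncated operator $K_R$ factors through the bounded region $\Omega_R = \{r \leq 2R\}$, where weighted and unweighted norms are equivalent, as
\[
W^{2,2}_\delta \longrightarrow W^{2,2}(\Omega_R) \hookrightarrow L^{2n/(n-2)}(\Omega_R) \xrightarrow{\,\cdot\,\chi_R f\,} L^2(\Omega_R) \hookrightarrow L^2_{\delta-2}(\mathcal M),
\]
where the middle inclusion is compact by the Rellich--Kondrachov theorem (for any $n \geq 3$). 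Thus $K$ is a norm limit of compact operators, hence itself compact.

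The hard part is mostly pedantic bookkeeping with weights and exponents: $2n/(n-2)$ is the critical Sobolev exponent, so the small parameter $\epsilon > 0$ plays the crucial role of furnishing just enough slack to slide between weight $\delta$ and weight $\delta + \epsilon$ and to force the tail $(1-\chi_R) f$ to zero in $L^n_{-2-\epsilon}$. Once these two ingredients are in hand, the conclusion follows from the textbook fact that a compact perturbation of a Fredholm operator is Fredholm.
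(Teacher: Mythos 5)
Your proof is correct, but it takes a genuinely different route from the paper's. The paper never isolates the multiplication operator $u\mapsto fu$ as a compact map; instead it combines the coercivity estimate (\ref{coercivelaplace}) for $\Delta_g$ with the weighted H\"older, Sobolev and interpolation inequalities to obtain the scale-broken estimate $\|u\|_{2,2,\delta}\leq C\left(\|Lu\|_{2,\delta-2}+\|u\|_{2,\delta+\epsilon}\right)$, in which the error term sits in a space into which $W^{2,2}_\delta$ embeds compactly (weighted Rellich), and then runs the standard two-step argument: the closed unit ball of $\ker(L)$ is compact, hence the kernel is finite dimensional, and a contradiction argument gives $\|u\|_{2,2,\delta}\leq C\|Lu\|_{2,\delta-2}$ on a closed complement of the kernel, whence the range is closed. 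Both arguments exploit exactly the same slack --- the extra decay $\epsilon>0$ lets you pair $f\in L^n_{-2-\epsilon}$ against $u\in L^{2n/(n-2)}_{\delta+\epsilon}$ and thereby trade the critical Sobolev exponent for compactness --- and your truncation step, with the tail $\|(1-\chi_R)f\|_{n,-2-\epsilon}\to0$ controlling the operator norm of $E_R$ and the subcriticality of $2n/(n-2)$ for $W^{2,2}$ giving Rellich--Kondrachov on the core, is sound. What your version buys is strictly more: $L$ is Fredholm of index zero, not merely semi-Fredholm, so the finite-dimensional cokernel comes for free; indeed, with that extra information one could dispense with Lemma \ref{leminjfam} later in the paper, since an injective index-zero operator is automatically an isomorphism. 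What the paper's version buys is economy: it needs only the boundedness estimate for $fu$ (which you also prove) and the already-quoted weighted Rellich theorem, avoiding the factorization bookkeeping required to upgrade boundedness of the multiplication operator to compactness.
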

\begin{proof}
Making use of (\ref{coercivelaplace}), and the weighted H\"older, Sobolev and interpolation inequalities, we have
\begin{align}
\|u\|_{2,2,\delta}&\leq C\|Lu\|_{2,\delta-2}+\|fu\|_{2,\delta-2}\nonumber\\
&\leq C(\|Lu\|_{2,\delta-2}+\|f\|_{n,-2-\epsilon}\|u\|_{2n/(n-2),\delta+\epsilon})\nonumber\\
&\leq C(\|Lu\|_{2,\delta-2}+\|u\|_{1,2,\delta+\epsilon})\nonumber\\
&\leq C(\|Lu\|_{2,\delta-2}+\|u\|_{2,\delta+\epsilon})+\frac12\|u\|_{2,2,\delta+\epsilon}\nonumber\\
&\leq C(\|Lu\|_{2,\delta-2}+\|u\|_{2,\delta+\epsilon}).\label{LRscalebroken}
\end{align}
It then follows by a standard argument (see, for example, the proof of Theorem 1.10 in \cite{AF}) that $L$ has finite dimensional kernel and closed range. Let $u_i$ be a sequence in $\ker(L)$ satisfying $\|u\|_{2,2,\delta}\leq1$; that is, a sequence in the closed unit ball in $\ker(L)$. By the weighted Rellich compactness theorem, and passing to a subsequence, we have $u_i\rightarrow u$ in $L^2_{\delta+\epsilon}$. Then by the estimate above, we conclude $u_i$ also converges in $W^{2,2}_\delta$, and it follows that $\ker(L)$ is finite dimensional. It follows that the domain splits as $\W^{1,2}_\delta \cap W^{2,2}_\delta=\ker(L)\oplus K$, where $K$ is some closed complementary subspace. To prove that $\ran(L)$ is closed, we require the following estimate for all $u\in K$,
\begin{equation}\label{Kestimate}
\|u\|_{2,2,\delta}\leq C\|Lu\|_{2,\delta-2},
\end{equation}
which we prove by contradiction. If (\ref{Kestimate}) were not true, there must exist a sequence $u_i$ satisfying $\|u_i\|_{2,2,\delta}=1$ while $\|Lu_i\|_{2,\delta-2}\rightarrow0$. But again, (\ref{LRscalebroken}) and the weighted Rellich compactness theorem imply $u_i\rightarrow u$ in $W^{2,2}_\delta$, passing to a subsequence if necessary. Since $K$ is closed, we have $u\in K$, and therefore $u$ is a nonzero element of both $K$ and $\ker(L)$, which is a contradiction. It follows that (\ref{Kestimate}) holds, and therefore for any Cauchy sequence in the range $v_i=Lu_i$, $u_i$ is also Cauchy and therefore converges in $W^{2,2}_\delta$. By continuity, $v_i$ converges and therefore $\ran(L)$ is closed.

\end{proof}
Note that the regularity of $f$ is almost certainly not optimal, however it suffices for our purposes here.

\section{Dirichlet boundary conditions}\label{Syamabe}
We are now ready to discuss the asymptotically flat scalar-flat Yamabe problem, with Dirichlet boundary conditions on the metric. Given some fixed metric $g$, consider conformally related metrics of the form $\tilde{g}=\phi^{4/(n-2)}g$. The problem of prescribing scalar curvature $\tilde{R}:=R(\tilde g)=f$ is equivalent to solving
\begin{equation}\label{yamabe}
\left\{\begin{matrix}
\frac{4(n-1)}{n-2}\Delta_g\phi-R\phi+\phi^{(n+2)/(n-2)}f=0\\
\phi>0
\end{matrix}\right. .
\end{equation}
The Yamabe problem is historically concerned with the case where $f$ is constant, and the case of particular interest here is the case where $\tilde R=f\equiv0$. Recall, asymptotically flat scalar-flat 3-manifolds correspond to initial data for the vacuum Einstein equations. Note that this case is significantly simpler than the general case as it removes the problem of the critical Sobolev exponent. The system under consideration in this Section is
\begin{equation}\label{yamabe2}
\left\{\begin{matrix}
\frac{4(n-1)}{n-2}\Delta_g\phi-R\phi=0&\text{ on }\mathcal M\\
\phi>0&\\
\phi-1=o(r^{5/2-n})&\\
\phi\equiv1&\text{ on } \partial\mathcal M
\end{matrix}\right. 
\end{equation}
where $g$ is assumed to be asymptotically flat. The argument we use here follows that of Cantor and Brill \cite{CantorBrill1981} in the case where no boundary is present. It has been noted by Maxwell \cite{maxwell2005solutions} that the Cantor-Brill proof has a minor error, so we are careful avoid this error here.

Recall now, the Sobolev quotient, given by
\begin{equation}
Q(\mathcal M,g)=\inf_{f\in C^\infty_c}\frac{\int_{\mathcal M} |\nabla f|^2+\frac{n-2}{4(n-1)}Rf^2 d\mu}{\|f\|^2_{L^{2n/(n-2)}}},
\end{equation}
which is conformally invariant, and intimately connected with the solvability of the Yamabe problem.

\begin{lemma}
Let $(\mathcal M,g)$ be an asymptotically flat manifold with $Q(\mathcal M,g)>0$. For each $\lambda\in[0,1]$, the operator $A_\lambda=\frac{4(n-1)}{n-2}\Delta_g-\lambda R$ is injective on $u\in H^2_{1-\frac n2}\cap \bH^1_{1-\frac n2}$.
\end{lemma}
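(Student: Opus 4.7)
The plan is to test the equation $A_\lambda u = 0$ against $u$, integrate by parts, and then use positivity of the Sobolev quotient via a density argument. The case $\lambda = 0$ is immediate from Proposition \ref{propisolaplace}, since $A_0$ is a nonzero multiple of $\Delta_g$ between exactly the spaces in question. So I assume $\lambda \in (0, 1]$ and suppose $u \in H^2_{1-\frac n2} \cap \bH^1_{1-\frac n2}$ satisfies $A_\lambda u = 0$.

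First I would derive the identity
\begin{equation*}
\int_{\mathcal M} |\nabla u|^2 + \lambda \frac{n-2}{4(n-1)} R u^2 \, d\mu = 0
\end{equation*}
by multiplying $A_\lambda u = 0$ through by $-\tfrac{n-2}{4(n-1)} u$ and integrating by parts on an exhausting family of compact subsets. The contribution at $\partial\mathcal M$ vanishes because $u$ has zero trace (by membership in $\bH^1_{1-\frac n2}$), and the asymptotic contribution vanishes because the weight $\delta = 1-\tfrac n2$ forces $u\, \nabla u = o(r^{1-n})$ on $\mathcal M \setminus K$, which integrates to $o(1)$ against spheres of area $O(r^{n-1})$.

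Next I would transfer the Sobolev-quotient inequality to $u$ itself by density. By definition of $\bH^1_{1-\frac n2}$, there is a sequence $u_k \in C^\infty_c(\mathcal M \setminus \partial\mathcal M)$ converging to $u$ in $W^{1,2}_{1-\frac n2}$. The weighted Sobolev embedding gives $W^{1,2}_{1-\frac n2} \hookrightarrow L^{2n/(n-2)}$ (note the target is the \emph{unweighted} space, since the exponent $-\delta p - n$ from (\ref{weighted1}) reduces to zero at this critical value of $\delta$), and, together with weighted H\"older applied to the $R$ term (using that $R \in L^{n/2}_{-2}$ or better by the asymptotic flatness hypothesis), this lets me pass to the limit in
\begin{equation*}
\int |\nabla u_k|^2 + \tfrac{n-2}{4(n-1)} R u_k^2 \, d\mu \;\geq\; Q(\mathcal M, g)\, \|u_k\|^2_{L^{2n/(n-2)}}
\end{equation*}
to obtain the same inequality for $u$.

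Finally, I would combine the two displays through the convex-combination identity
\begin{equation*}
0 \;=\; (1-\lambda)\!\int |\nabla u|^2 + \lambda\!\left[\int |\nabla u|^2 + \tfrac{n-2}{4(n-1)} R u^2\right] \;\geq\; (1-\lambda)\!\int |\nabla u|^2 + \lambda Q \|u\|^2_{L^{2n/(n-2)}},
\end{equation*}
both terms on the right being nonnegative. Positivity of $Q$ together with $\lambda > 0$ then forces $\|u\|_{L^{2n/(n-2)}} = 0$, hence $u = 0$. The main obstacle, such as it is, is the bookkeeping in the integration by parts and in verifying that the quadratic functional on the left of the Sobolev-quotient inequality is continuous in the $W^{1,2}_{1-\frac n2}$ topology so that the inequality descends from $C^\infty_c$ to $\bH^1_{1-\frac n2}$; both are routine given the weighted embedding and H\"older tools already in force in the excerpt.
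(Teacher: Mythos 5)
Your proposal is correct and follows essentially the same route as the paper: test $A_\lambda u=0$ against $u$, integrate by parts to get the quadratic identity, transfer the Sobolev-quotient inequality to $u$ by density, and conclude from $\lambda\le 1$ and $Q>0$ (your convex-combination display is just a repackaging of the paper's final inequality chain). One small caveat: the pointwise claim $u\,\nabla u=o(r^{1-n})$ does not follow from membership in $H^2_{1-\frac n2}$ alone, since the weighted norms control $\nabla u$ only in an $L^2$ sense; the paper sidesteps this by integrating by parts against the compactly supported approximants $u_m$ and passing to the limit via continuity of the bilinear forms, and on your route you would instead need, e.g., a sequence of radii along which the flux integral tends to zero.
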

\begin{proof}

Suppose $u\in H^2_{1-\frac n2}\cap \bH^1_{1-\frac n2}$ satisfies $A_\lambda u=0$, and let $u_m\in C^\infty_c$ be a sequence converging to $u$ in $\bH^1_{1-\frac n2}$. We have
$$\int_{\mathcal M} (\frac{4(n-1)}{n-2}u_m\Delta_g u-\lambda R u_m u) d\mu=0.$$
Integrating by parts,
$$-\frac{4(n-1)}{n-2}\int_{\mathcal M} \nabla_i u_m\nabla^i u\, d\mu=\lambda\int_{\mathcal M}Ru_m u\, d\mu.$$
From the weighted H\"older and Sobolev inequalities, we have
\[|\int_{\mathcal M} \nabla_i v\nabla^i u\, d\mu|\leq C\|v\|_{1,2,1-n/2}\|u\|_{1,2,1-n/2}\]
and
\begin{align*}
 |\int_{\mathcal M}Rv u\, d\mu&|\leq C\|R\|_{n/2,-2}\|v\|_{2n/(n-2),1-n/2}\|u\|_{2n/(n-2),1-n/2}\\
&\leq C\|R\|_{n/2,-2}\|v\|_{1,2,1-n/2}\|u\|_{1,2,1-n/2},
\end{align*}
noting that $\|R\|_{n/2,-2}$ is finite by assumption of asymptotic flatness. It follows that the maps, $v\mapsto\int_\mathcal{M}\nabla_i(v)\nabla^i(u)\, d\mu$ and $v\mapsto\int_\mathcal{M} Ruv\,d\mu$ are continuous. Passing to the limit we conclude
$$-\frac{4(n-1)}{n-2}\int_{\mathcal M} |\nabla u|^2\, d\mu=\lambda\int_{\mathcal M}Ru^2\, d\mu.$$
We have already that $A_0$ is an isomorphism, so we may assume $\lambda\in(0,1]$. If $u\neq0$, we have
$$\frac{4(n-1)}{\lambda(n-2)}\int_{\mathcal M} |\nabla u|^2\, d\mu=-\int_{\mathcal M}Ru^2\, d\mu<\frac{4(n-1)}{n-2}\int_{\mathcal M} |\nabla u|^2\, d\mu$$
from the assumption that $Q(\mathcal M,g)>0$. This cannot hold for $\lambda \leq1$ so we therefore conclude $u\equiv0$ and therefore $A_\lambda$ is an injection.
\end{proof}

We next make use of the following well-known lemma (cf. \cite{cantor2,CantorBrill1981}).
\begin{lemma}\label{leminjfam}
Let $E,F$ be Banach spaces and suppose for $\lambda\in[0,1]$, $L_\lambda:E\rightarrow F$ is a continuous family of bounded linear operators. If $L_0$ is an isomorphism and each $L_\lambda$ is an injection with closed range, then each $L_\lambda$ is in fact an isomorphism.
\end{lemma}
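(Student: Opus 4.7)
The plan is to apply the standard method of continuity. Set $S = \{\lambda \in [0,1] : L_\lambda \text{ is an isomorphism}\}$, which contains $0$ by hypothesis. Since $[0,1]$ is connected, it suffices to show $S$ is both open and closed in $[0,1]$, whence $S = [0,1]$.

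For openness, I would invoke the classical fact that the invertible elements of $B(E,F)$ form an open set: if $L_{\lambda_0}$ is invertible and $\lambda$ lies in a small enough neighbourhood of $\lambda_0$, then the continuity of $\lambda \mapsto L_\lambda$ together with a Neumann series expansion applied to $L_\lambda = L_{\lambda_0}\bigl(I + L_{\lambda_0}^{-1}(L_\lambda - L_{\lambda_0})\bigr)$ yields invertibility of $L_\lambda$.

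The heart of the matter is closedness of $S$, and this is where the injectivity and closed range hypotheses enter. For each $\lambda$, the operator $L_\lambda$ is a continuous linear bijection from $E$ onto its closed range $\ran(L_\lambda) \subseteq F$, so the open mapping theorem (applied in $\ran(L_\lambda)$, which is Banach as a closed subspace of $F$) produces a constant $c_\lambda > 0$ with $\|L_\lambda x\| \geq c_\lambda \|x\|$ for all $x \in E$. A direct estimate gives $|c_\lambda - c_{\lambda'}| \leq \|L_\lambda - L_{\lambda'}\|$, so $\lambda \mapsto c_\lambda$ is continuous on the compact interval $[0,1]$ and is therefore bounded below by a uniform $c > 0$.

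With the uniform lower bound in hand, closedness follows from a Cauchy sequence argument: take $\lambda_n \to \lambda$ with $\lambda_n \in S$, fix $y \in F$, and set $x_n = L_{\lambda_n}^{-1} y$. The identity $L_{\lambda_n}(x_n - x_m) = (L_{\lambda_m} - L_{\lambda_n}) x_m$ combined with the uniform estimate $\|x_m\| \leq c^{-1} \|y\|$ and the continuity of $\lambda \mapsto L_\lambda$ shows $(x_n)$ is Cauchy in $E$, hence converges to some $x$ with $L_\lambda x = y$. The main obstacle is precisely the uniform lower bound $c > 0$; the individual constants $c_\lambda$ are enough to invert $L_\lambda$ on its range pointwise in $\lambda$, but without uniformity one cannot rule out $\|x_n\|$ blowing up along the sequence. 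The continuity of $c_\lambda$ together with compactness of $[0,1]$ is exactly what rescues the argument.
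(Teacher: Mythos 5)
Your proof is correct. Note that the paper itself states this lemma without proof, citing it as well known from Cantor and Cantor--Brill; your method-of-continuity argument (openness via Neumann series, closedness via a uniform lower bound $\|L_\lambda x\|\geq c\|x\|$ obtained from injectivity plus closed range and the open mapping theorem) is exactly the standard argument behind those references. The only point worth making explicit is that the Lipschitz estimate $|c_\lambda-c_{\lambda'}|\leq\|L_\lambda-L_{\lambda'}\|$ requires you to take $c_\lambda$ to be the \emph{optimal} constant $c_\lambda=\inf_{\|x\|=1}\|L_\lambda x\|$ rather than an arbitrary constant supplied by the open mapping theorem, and that ``continuous family'' must be read as continuity in operator norm (which is what holds in the application, where $L_\lambda$ is affine in $\lambda$).
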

From this, we establish:

\begin{prop}\label{propyamabe1}
Assume $(g-\og)\in H^k_{5/2-n}$ for some $k> n+2$, and $Q(\mathcal M,g)>0$, then there exists $\phi$ with $(\phi-1)\in H^2_{5/2-n}\cap\bH^1_{5/2-n}$ satisfying  (\ref{yamabe2}).
\end{prop}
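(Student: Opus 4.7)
The plan is to set $\phi = 1 + u$, reduce \eqref{yamabe2} to a linear Dirichlet problem for $u$, solve it via the deformation argument of Lemma \ref{leminjfam} along the family $A_\lambda$, and then invoke $Q>0$ once more to secure positivity. With this substitution the PDE becomes $A_1 u = R$ subject to $u \in H^2_{5/2-n}\cap \bH^1_{5/2-n}$. Under asymptotic flatness $R \in H^{k-2}_{1/2-n} \subset L^2_{1/2-n}$, which matches the target space of $A_\lambda : H^2_{5/2-n}\cap \bH^1_{5/2-n} \to L^2_{1/2-n}$.

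To apply Lemma \ref{leminjfam} I would verify each hypothesis for this family. The operator $A_0$ is an isomorphism by Proposition \ref{propisolaplace}. The family is continuous (in fact affine) in $\lambda$ because multiplication by $R$ is bounded from $H^2_{5/2-n}$ to $L^2_{1/2-n}$ by the weighted H\"older and Sobolev inequalities. Closed range for each $A_\lambda$ follows from Corollary \ref{corfR} with $f=-\lambda R$, provided $R \in L^n_{-2-\epsilon}$ for some $\epsilon>0$; this is obtained by combining the pointwise bound $|R| \leq C r^{1/2-n}$ (from the weighted Sobolev-Morrey embedding of $H^{k-2}_{1/2-n}$, since $k-2 > n/2$) with a direct integration showing the needed integrability for any $\epsilon < n-5/2$, admissible in all dimensions $n\geq 3$. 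Injectivity of each $A_\lambda$ reduces to the preceding lemma: since $5/2 - n \leq 1 - n/2$ for $n\geq 3$, the inclusion $H^2_{5/2-n}\cap \bH^1_{5/2-n} \hookrightarrow H^2_{1-n/2}\cap \bH^1_{1-n/2}$ carries any kernel element into the hypothesis of that lemma, forcing it to vanish. Lemma \ref{leminjfam} then provides the desired $u$, and hence $\phi = 1+u$ satisfying the linear equation together with the boundary and decay conditions.

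For positivity of $\phi$, elliptic regularity together with $k > n+2$ makes $\phi$ at least $C^2$, so classical maximum principles apply. Setting $v = \min(\phi, 0)$, one observes that $v$ vanishes on $\partial \mathcal M$, is compactly supported (as $\phi \to 1$ at infinity), and lies in the $H^1$-closure of $C_c^\infty(\mathcal M \setminus \partial \mathcal M)$. Multiplying the PDE by $v$ and integrating by parts yields
\[
\frac{4(n-1)}{n-2}\int_{\mathcal M}|\nabla v|^2\, d\mu = -\int_{\mathcal M} R v^2\, d\mu,
\]
and if $v \not\equiv 0$ this contradicts the strict inequality coming from $Q(\mathcal M,g)>0$ applied to $v$. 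Hence $\phi \geq 0$, and the strong maximum principle combined with $\phi \to 1$ at infinity gives $\phi > 0$ everywhere.

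The main obstacle I anticipate is juggling the various weighted function spaces consistently: the injectivity lemma is stated at weight $1-n/2$ while the solution lives at weight $5/2-n$, the closed-range corollary needs the potential in a slightly nonstandard Lebesgue space, and the positivity argument must retreat to unweighted $H^1$ to exploit $Q>0$. The compatibility $5/2-n \leq 1-n/2$ for $n\geq 3$ is the small-but-essential ingredient that makes the pipeline run, and I expect the Cantor-Brill subtlety warned about in the text to lie in precisely this sort of weight bookkeeping (or else in the transition from weak-solution existence to the strict positivity conclusion).
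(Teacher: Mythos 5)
Your existence argument coincides with the paper's: both substitute $v=\phi-1$, reduce (\ref{yamabe2}) to $A_1v=R$, and obtain invertibility of $A_1$ from Lemma \ref{leminjfam} by combining injectivity of each $A_\lambda$, closed range via Corollary \ref{corfR} (with $R\in L^n_{-2-\epsilon}$ for $\epsilon<n-5/2$), and Proposition \ref{propisolaplace} for $A_0$; your explicit weight bookkeeping, in particular the inclusion coming from $5/2-n\leq 1-n/2$ for $n\geq3$, is a correct unpacking of what the paper leaves implicit. Where you genuinely diverge is the positivity step. The paper follows Cantor--Brill: it solves $A_\lambda\phi_\lambda=0$ for \emph{every} $\lambda\in[0,1]$, uses continuity of $\lambda\mapsto\phi_\lambda$ in $C^0$ together with $\phi_0\equiv1$ to produce a parameter $\lambda_0$ at which $\phi_{\lambda_0}\geq0$ vanishes at an interior point with vanishing gradient, and then invokes Aleksandrov's unique continuation theorem (comparing with the zero solution) to force $\phi_{\lambda_0}\equiv0$, a contradiction. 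You instead test the single equation $A_1\phi=0$ against the truncation $v=\min(\phi,0)$ and play the resulting identity $\tfrac{4(n-1)}{n-2}\int|\nabla v|^2\,d\mu=-\int Rv^2\,d\mu$ off against $Q(\mathcal M,g)>0$, which gives $0\geq Q\|v\|^2_{L^{2n/(n-2)}}$ and hence $v\equiv0$ directly, with no deformation in $\lambda$ and no unique continuation; this is essentially Maxwell's version of the argument rather than the paper's. The price is a density check --- that $v$ is compactly supported, vanishes near $\partial\mathcal M$ (both true since $\phi=1$ on $\partial\mathcal M$ and $\phi\to1$ at infinity), and is an admissible test function for the Sobolev quotient --- and one small repair at the end: since $R$ has no sign, the strong maximum principle upgrading $\phi\geq0$ to $\phi>0$ should be applied after rewriting the equation as $\bigl(\Delta_g-\tfrac{n-2}{4(n-1)}R^-\bigr)\phi=-\tfrac{n-2}{4(n-1)}R^+\phi\leq0$ (or by invoking the Harnack inequality for nonnegative solutions). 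Both routes are sound; yours is arguably the more robust, as it sidesteps the continuity-in-$\lambda$ claim that the paper must patch with Aleksandrov's theorem.
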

\begin{proof}
By assumption, $R\in L^n_{1/2-n}$, so $A_{\lambda}$ is an injection and by Corollary \ref{corfR}, it has closed range; that is, by Lemma \ref{leminjfam} we have that $A_1$, in particular, is an isomorphism.

Now let $v=\phi-1$ and note that (\ref{yamabe2}) requires
\begin{equation}\label{yamabemin1}
\frac{4(n-1)}{n-2}\Delta_g v-Rv=R.
\end{equation}
Since $A_1$ is an isomorphism, we have a unique solution $v\in (H^2_{5/2-n}\cap\overline{H}^1_{5/2-n})$. Furthermore, standard elliptic theory implies that $v$ is $C^{1,\alpha}$.

It remains to be shown that $\phi$ is positive, which follows identically from the Cantor-Brill argument, and essentially is as follows. For each $\lambda\in[0,1]$, there exists a unique $\phi_\lambda$ with $(\phi_\lambda-1)\in(H^2_{5/2-n}\cap \bH^1_{5/2-n}\cap C^{1,\alpha}_{loc})$ such that $A_\lambda\phi_\lambda=0$. Furthermore, $\phi_\lambda$ depends continuously on $\lambda$ in the $C^0$ topology. Since $\phi_0\equiv1$, if there is any $\phi_\lambda$ that is not strictly positive, then there must be a $\phi_{\lambda_0}\geq0$ and $x_0\in\mathcal{M}\setminus{\partial\mathcal M}$ such that $\phi_{\lambda_0}(x_0)=0$, which then implies $\nabla\phi_{\lambda_0}(x_0)=0$. However, a classical result (Theorem A of \cite{aleksandrov1962uniqueness}) says that if $u_1\leq u_2$ satisfy $A_{\lambda_0}u_1=0=A_{\lambda_0}u_2$ on some region $G$, and there is a point $x_0\in G$ such that $u_1(x_0)=u_2(x_0)$ and $\nabla u_1(x_0)=\nabla u_2(x_0)$, then $u_1=u_2$ on $G$ (cf. \cite{cantoryorkdata1979}, Theorem 1.7 and the proof of Theorem 2.3). That is, $\phi_{\lambda0}\equiv0$, which is a contradiction and it therefore follows that $\phi>0$.
\end{proof}
From which we have the following immediate Corollary.
\begin{corollary}\label{cor1}
Assume $(g-\og)\in H^k_{5/2-n}$ for some $k> n+2$, and $Q(\mathcal M,g)>0$, then there exists an asymptotically flat scalat-flat metric, conformal to $g$, that agrees with $g$ on $\partial \mathcal M$.
\end{corollary}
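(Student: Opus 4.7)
The plan is to leverage Proposition \ref{propyamabe1} directly: it produces a positive function $\phi$ with $(\phi-1) \in H^2_{5/2-n} \cap \overline{H}^1_{5/2-n}$ solving (\ref{yamabe2}). The corollary is then essentially a matter of unpacking what such a $\phi$ tells us about the metric $\tilde g = \phi^{4/(n-2)} g$.

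The first step is to set $\tilde g := \phi^{4/(n-2)} g$ and verify the four required properties: conformality, scalar-flatness, boundary agreement, and asymptotic flatness. Conformality is immediate from the definition, and scalar-flatness is exactly the content of (\ref{yamabe2}) via the standard conformal transformation law for the scalar curvature. Boundary agreement follows from the Dirichlet condition $\phi \equiv 1$ on $\partial \mathcal M$, which gives $\tilde g|_{\partial \mathcal M} = g|_{\partial \mathcal M}$ pointwise.

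The only non-cosmetic step is checking asymptotic flatness of $\tilde g$. I would write
\[
\tilde g - \og = \bigl(\phi^{4/(n-2)} - 1\bigr)\, g + (g - \og),
\]
where the second term lies in $H^k_{5/2-n}$ by hypothesis. For the first, I would use that $\phi - 1 \in H^2_{5/2-n}$ together with the Sobolev-Morrey embedding (already invoked in the paper to get H\"older continuity, and here to get that $\phi$ is bounded and bounded away from $0$ at infinity), to conclude that $\phi^{4/(n-2)} - 1$ inherits the same $H^2_{5/2-n}$ decay via the standard composition-with-smooth-functions estimate for weighted Sobolev spaces. Multiplying by the bounded tensor $g$ preserves this weighted class, so $\tilde g - \og \in H^2_{5/2-n}$, which is more than enough for $\tilde g$ to satisfy the definition of asymptotic flatness used in Section 2.

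I do not anticipate any real obstacle: all the analytic work has been carried out in Proposition \ref{propyamabe1}, and the corollary merely translates the conclusion from a statement about the conformal factor $\phi$ into a statement about the conformally related metric.
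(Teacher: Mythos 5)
Your proposal is correct and matches the paper's treatment: the paper presents Corollary \ref{cor1} as an immediate consequence of Proposition \ref{propyamabe1}, and your unpacking --- conformality by construction, scalar-flatness from the conformal transformation law, boundary agreement from $\phi\equiv1$ on $\partial\mathcal M$, and asymptotic flatness from the weighted decay of $\phi-1$ --- is exactly the implicit content of that one-line deduction. The only caveat worth noting is that for $n\geq4$ the raw $H^2_{5/2-n}$ control on $\phi-1$ is not literally ``more than enough'' to meet the Section 2 definition of asymptotic flatness (which requires $W^{k,2}_{5/2-n}$ with $k>n/2$, i.e.\ $k\geq3$ when $n\geq4$), but this is repaired by the elliptic bootstrapping already available from the hypothesis $(g-\og)\in H^k_{5/2-n}$ with $k>n+2$.
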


\begin{remark}
The assumption $k> n+2$ is certainly not sharp here, however this allow us to avoid complications by simply quoting standard elliptic theory.
\end{remark}

\section{Prescribed boundary mean curvature}
In this Section, we give conditions under which a function on $\partial \mathcal M$ can be realised as the mean curvature an asymptotically flat scalar-flat metric in a prescribed conformal class. While the scalar curvature transforms according to (\ref{yamabe}) under a conformal transformation $\tilde{g}=u^{4/(n-2)}g$, the mean curvature of ${\partial\mathcal M}$ transforms according to
\begin{equation}
\tilde{H}=u^{-n/(n-2)}\left(\frac{2}{n-2}\frac{\partial u}{\partial \eta}+Hu\right);
\end{equation}
where $\eta$ is the outward unit normal, pointing away from infinity, and $H$ is the mean curvature with respect to $\eta$. This problem can be simplified greatly by recalling work of Maxwell \cite{maxwell2005solutions} where asymptotically flat solutions to the Einstein constraints with apparent horizon boundary conditions are constructed. In particular, it is shown that the positivity of the Yamabe constant is a sufficient and necessary condition to ensure the existence of a scalar-flat, asymptotically flat metric with minimal surface boundary in a given conformal class. That is, without loss of generality we can assume both $R$ and $H$ vanish. The problem of finding a scalar-flat asymptotically flat metric conformal to $g$, with boundary mean curvature given by some $f$, then reduces to the following:

\begin{equation}\label{mainproblem}
\left\{\begin{matrix}
\Delta_gu=0&\text{ on }\mathcal M\\
\frac{\partial u}{\partial \eta}-\frac{n-2}{n}fu^{n/(n-2)}=0&\text{ on }\partial \mathcal M\\
u>0&
\end{matrix}\right. .
\end{equation}

We absorb the constant $\frac{n-2}n$ into $f$, and consider the more general problem

\begin{equation}\label{generalproblem}
\left\{\begin{matrix}
\Delta_gu=0&\text{ on }\mathcal M\\
\frac{\partial u}{\partial \eta}-fu^{\beta}=0&\text{ on }\partial \mathcal M\\
u>0
\end{matrix}\right. ,
\end{equation}
for any $\beta\in\mathbb{R}$.

The method of sub- and supersolutions has been particularly fruitful in considering the Yamabe problem on non-compact domains \cite{maxwell2005solutions,schwartz2006zero}, and it once again finds use here. The particular theorem regarding sub- and supersolutions that we use here is again due to Maxwell; we state a special case of this result below, which suffices for our purposes.

\begin{prop}[Proposition 2 of \cite{maxwell2005solutions}]
Suppose $g\in W^{k,p}_\rho$ with $k\geq 2$, $p>n/k$ and $\rho<0$. Further suppose that $f\in W^{k-1-\frac{1}{p},p}({\partial\mathcal M})$. Then if $u_-,u_+\in W^{k,p}_\delta$, for $\delta\in(2-n,0)$, are a subsolution and a supersolution of (\ref{generalproblem}) respectively, there is a solution $u$ to (\ref{generalproblem}) satsifying $u_-\leq u\leq u_+$.
\end{prop}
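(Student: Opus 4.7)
The plan is to use monotone iteration from the supersolution, the classical technique behind sub/supersolution arguments, adapted to the weighted Sobolev / mixed BVP setting. The nonlinear boundary term $fu^\beta$ need not be monotone in $u$ on $[u_-,u_+]$, so I first introduce a shift. Since $kp>n$, the weighted Sobolev--Morrey embedding gives $u_\pm\in C^0(\mathcal M)$, and compactness of $\partial\mathcal M$ then yields bounds $m\leq u_\pm(x)\leq M$ on the boundary. I then choose a constant $K>0$ large enough that the map $t\mapsto f(x)t^\beta+Kt$ is nondecreasing in $t$ throughout $[m,M]$ for every $x\in\partial\mathcal M$; this is possible because $f$ is bounded on the compact boundary. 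Setting $u^{(0)}=u_+$, I iteratively define $u^{(k+1)}$ to be the solution of the linear Robin problem
\begin{align*}
\Delta_g u^{(k+1)} &= 0 \text{ on } \mathcal M, \\
\partial_\eta u^{(k+1)} + K u^{(k+1)} &= f\bigl(u^{(k)}\bigr)^\beta + K u^{(k)} \text{ on } \partial\mathcal M,
\end{align*}
in the appropriate weighted class.

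The first substantive task is therefore to prove that
\[
L_K : u \mapsto \bigl(\Delta_g u,\, (\partial_\eta + K) u|_{\partial\mathcal M}\bigr)
\]
is an isomorphism from $W^{k,p}_\delta$ onto $L^p_{\delta-2}\times W^{k-1-1/p,p}(\partial\mathcal M)$. I would obtain Fredholm index zero by viewing $L_K$ as a compact perturbation of the Dirichlet Laplacian (handled by Proposition \ref{propisolaplace} and its $L^p$ analogue), using compactness of the trace to absorb the boundary data into a harmonic corrector. Triviality of the kernel then follows from positivity of $K$: any $v\in\ker L_K$ satisfies $\Delta_g v=0$ and $\partial_\eta v=-Kv$, so integration by parts -- justified because $\delta<0$ makes the boundary term at infinity vanish -- yields
\[
\int_{\mathcal M}|\nabla v|^2\,d\mu + K\int_{\partial\mathcal M} v^2\,dA = 0,
\]
forcing $v\equiv 0$.

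Granted linear solvability, monotonicity is proved inductively: the difference $u^{(k)}-u^{(k+1)}$ satisfies a linear homogeneous Robin problem in the same class, so the strong maximum principle together with Hopf's lemma at the boundary gives nonnegativity; an analogous comparison with $u_-$ (using that $u_-$ is a subsolution and the shift monotonicity) sandwiches the sequence from below. Thus $u_-\leq u^{(k+1)}\leq u^{(k)}\leq u_+$, and $u^{(k)}\searrow u$ pointwise for some $u$. Continuity of the composition $w\mapsto fw^\beta+Kw$ on the bounded range gives convergence of the right-hand sides in $W^{k-1-1/p,p}(\partial\mathcal M)$, and the linear isomorphism promotes this to convergence $u^{(k)}\to u$ in $W^{k,p}_\delta$; passing to the limit shows $u$ solves (\ref{generalproblem}), with positivity inherited from $u\geq u_-$. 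The main obstacle is the linear isomorphism step: weighted elliptic theory with Robin boundary conditions on a manifold with noncompact ends requires care, since one must both identify $L_K$ as a compact perturbation of an invertible Dirichlet problem \emph{and} use positivity of $K$ to close the kernel argument; everything else is comparatively routine monotone iteration machinery.
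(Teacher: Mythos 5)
First, a point of orientation: the paper does not prove this statement at all --- it is quoted verbatim as Proposition 2 of Maxwell \cite{maxwell2005solutions} and used as a black box, so there is no in-paper proof to compare against. Your strategy (shift the boundary nonlinearity by $Ku$ to restore monotonicity, iterate downward from the supersolution through linear Robin problems, squeeze with the maximum principle) is in fact the standard architecture and is essentially the one Maxwell himself uses, so at the level of overall design you have reconstructed the right proof.

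That said, two steps do not work as written. The more serious one is your route to the linear isomorphism: you cannot obtain the Robin operator $L_K$ as a ``compact perturbation of the Dirichlet Laplacian,'' because the two operators have different boundary conditions and hence act on genuinely different domains ($\W^{1,2}_\delta\cap W^{2,2}_\delta$ versus all of $W^{k,p}_\delta$); a perturbation argument starting from Proposition \ref{propisolaplace} has nothing to perturb. The correct route is to prove the a priori estimate and Fredholm property for the Robin/oblique problem directly on the weighted spaces (scaled interior and boundary Schauder/$L^p$ estimates giving an inequality of the form (\ref{nonfredholm}) with the boundary norm included, then Rellich compactness to kill the error term), which is exactly what Maxwell's Lemma 3 does; only after that does your kernel computation finish the job. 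Second, your justification of the integration by parts --- ``$\delta<0$ makes the boundary term at infinity vanish'' --- is false for general $\delta\in(2-n,0)$: the flux term $\int_{S_r}v\,\partial_r v$ is only $O(r^{2\delta+n-2})$, which diverges for $\delta>1-\tfrac n2$. You need the standard improved-decay statement that a harmonic (or $L_K$-kernel) element of $W^{k,p}_\delta$ with $\delta<0$ actually lies in $W^{k,p}_{2-n+\epsilon}$, since $(2-n,0)$ contains no exceptional weights. Two smaller soft spots worth acknowledging: Hopf's lemma at the boundary requires $C^1$ regularity up to $\partial\mathcal M$, which $kp>n$ alone does not supply when $k=2$ and $p$ is close to $n/2$ (Maxwell substitutes a weak maximum principle here), and the convergence of the right-hand sides in the full trace norm $W^{k-1-1/p,p}(\partial\mathcal M)$ does not follow from pointwise monotone convergence alone --- one should instead extract a weak limit from the uniform $W^{k,p}_\delta$ bounds and identify it.
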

As usual, a subsolution is taken to be $u_-$ satisfying
\begin{equation*}
\left\{\begin{matrix}
\Delta_gu\geq0&\text{ on }\mathcal M\\
\frac{\partial u}{\partial \eta}-fu^{\beta}\leq0&\text{ on }\partial\mathcal M
\end{matrix}\right. ,
\end{equation*}
while supersolution refers to the case where the inequalities are reversed.

We establish the following.
\begin{prop}\label{propyamabe2}
Let $(\mathcal M,g)$ be an asymptotically flat manifold, where $g\in H^k_{5/2-n}$ with $k>n$, and let $f\in H^{k-\frac32}({\partial\mathcal M})$ satisfy $f\leq0$. Then the problem (\ref{generalproblem}) has a solution. Furthermore, if $\beta>1$ then there exists a positive function, $\rho$, on $\partial \mathcal M$ such that the problem  (\ref{generalproblem}) has a solution for any $f<\rho$.
\end{prop}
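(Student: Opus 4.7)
The plan is to apply the sub- and supersolution theorem (Proposition 4.1 of the excerpt). As a preliminary reduction, I will use the result of Maxwell cited at the start of the section to conformally change $g$ once so that $R\equiv 0$ on $\mathcal M$ and $H\equiv 0$ on $\partial\mathcal M$; after this reduction, solving (\ref{generalproblem}) amounts to finding a positive harmonic $u$ with the nonlinear Neumann condition. Everything then reduces to exhibiting an ordered pair $u_-\le u_+$ of sub- and supersolutions.

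All candidates will be built from a single harmonic ``probe'' $\psi$ satisfying $\psi|_{\partial\mathcal M}=1$ and $\psi\to 0$ at infinity. To construct $\psi$, I take a smooth cutoff $\chi$ that equals $1$ near $\partial\mathcal M$ and vanishes outside a compact set; then $\Delta_g\chi$ is compactly supported, hence lies in $L^2_{\delta-2}$, and Proposition \ref{propisolaplace} furnishes $v\in\W^{1,2}_\delta\cap W^{2,2}_\delta$ with $\Delta_g v=-\Delta_g\chi$. Setting $\psi:=\chi+v$ gives a harmonic function with the desired boundary value and decay; the maximum and minimum principles force $0<\psi\le 1$, and Hopf's lemma on the compact boundary produces a uniform constant $c_0>0$ with $\partial\psi/\partial\eta\ge c_0$ on $\partial\mathcal M$.

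For the first assertion ($f\le 0$), $u_+\equiv 1$ is a supersolution because $\Delta u_+=0$ and $-f\ge 0$. For a subsolution I take $u_-=1-\epsilon\psi$ for $\epsilon\in(0,1)$: it is harmonic, satisfies $0<u_-\le u_+$, and the boundary inequality $-\epsilon\,\partial\psi/\partial\eta\le f(1-\epsilon)^\beta$ reduces (in the worst case $f<0$) to $|f|(1-\epsilon)^\beta\le\epsilon c_0$. Since $k>n$ gives the embedding $H^{k-3/2}(\partial\mathcal M)\hookrightarrow L^\infty(\partial\mathcal M)$, and $(1-\epsilon)^\beta\to 0$ as $\epsilon\to 1^-$ (for $\beta>0$), $\epsilon$ can be chosen close enough to $1$ that this holds pointwise, and Proposition 4.1 delivers the solution.

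For the second assertion, the same $u_-$ still serves as a subsolution for any $f$ bounded above by a fixed $\rho$ (automatic where $f\ge 0$, and handled as above where $f<0$). For a supersolution I try $u_+=1+B\psi$, whose boundary condition becomes $B\,\partial\psi/\partial\eta\ge f(1+B)^\beta$, i.e., $f\le B\,\partial\psi/\partial\eta/(1+B)^\beta$. Maximising the right-hand side over $B>0$ gives the critical $B=1/(\beta-1)$ and motivates the definition
\[
\rho(x):=\frac{\partial\psi}{\partial\eta}(x)\,\frac{(\beta-1)^{\beta-1}}{\beta^\beta},
\]
which is strictly positive by the Hopf bound. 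For $f<\rho$ pointwise, $u_+$ is then a genuine supersolution with $u_-\le u_+$, and Proposition 4.1 concludes. The main subtle point is precisely this sharp supersolution construction: the choice of $B$ has to be optimised because $B\mapsto B/(1+B)^\beta$ has a finite maximum exactly when $\beta>1$, and the positivity of $\rho$ hinges entirely on the Hopf lemma giving $\partial\psi/\partial\eta>0$. A minor bookkeeping step at the end is confirming that $u_\pm-1$ lie in the weighted Sobolev class required by Proposition 4.1, which is built into the construction of $\psi$ via Proposition \ref{propisolaplace} and standard elliptic bootstrapping.
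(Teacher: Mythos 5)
Your proposal is correct and follows essentially the same route as the paper: the same harmonic function with boundary value $1$ and decay at infinity (the paper's $v$ is your $\psi$), the same one-parameter family of sub- and supersolutions (the paper writes $1+(\alpha-1)v$, which is your $1-\epsilon\psi$ and $1+B\psi$ under $\alpha=1-\epsilon$ resp.\ $\alpha=1+B$), the same Hopf-lemma positivity of $\partial v/\partial\eta$, and the identical optimisation yielding $\rho=\frac{\partial v}{\partial\eta}(\beta-1)^{\beta-1}\beta^{-\beta}$. The only cosmetic differences are your explicit cutoff-plus-correction construction of $\psi$ and the use of the constant supersolution $u_+\equiv 1$ in the $f\leq 0$ case.
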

\begin{proof}
It follows from Proposition \ref{propisolaplace} that there exists $v$ satisfying $\Delta_gv=0$, that is identically equal to $1$ on $\partial\mathcal M$, and $v\in H^2_\delta$ for all $\delta\in(2-n,0)$. Standard elliptic theory again implies $v$ is $C^{1,\alpha}$ and furthermore, note that we have $v=O(r^{2-n})$. By the maximum principle and the Hopf lemma, $0<v\leq1$ on $\mathcal{M}$ and $0<\frac{\partial v}{\partial\eta}$ on $\partial \mathcal M$. We now let $u_-=1-v+\alpha v$ for some $\alpha>0$, noting that $\Delta_g u_-=0$ and we have
\begin{equation}\label{eqsubsuper}
\frac{\partial u_-}{\partial \eta}-fu_-^\beta=-\frac{\partial v}{\partial \eta}+\alpha\frac{\partial v}{\partial\eta}-f\alpha^\beta=\frac{\partial v}{\partial \eta}(\alpha-1)-f\alpha^\beta
\end{equation}
on $\partial \mathcal M$. Since $\frac{\partial v}{\partial \eta}>0$, we have $\frac{\partial u_-}{\partial \eta}-fu_-^\beta<0$ on $\partial \mathcal M$ for sufficiently small $\alpha$. Furthermore, we have $u_-\rightarrow 1$ at infinity, and the maximum principle gives $\alpha\leq u_-<1$. That is, $u_-$ is a subsolution to \ref{generalproblem}.

To find a supersolution, we let $u_+$ be of the same form as $u_-$ above, however we will choose a different $\alpha$. It is clear from (\ref{eqsubsuper}) that we have a supersolution if there exists an $\alpha$ satisfying $f<\frac{\partial v}{\partial\eta}\,(\alpha-1)\alpha^{-\beta}$. This is certainly true if $f<0$ as we may simply chose $\alpha>1$. If we now impose the condition $\beta>1$, we find the maximum occurs when $\alpha=\frac{\beta}{\beta-1}$; that is, for $\beta>1$ a supersolution exists provided that $f<\rho:=\frac{\partial v}{\partial\eta}\,(\frac{1}{\beta-1})^{1-\beta}\beta^{-\beta}$. Once more by the maximum principle, we have $1<u_+\leq\alpha$, and therefore $0<u_-\leq u_+$.
\end{proof}

From this, we have the following immediate corollary.

\begin{corollary}\label{cor2}
Let $(\mathcal M,g)$ satisfy the conditions of Proposition \ref{propyamabe2}, then there exists a positive function $\rho$ on $\partial \mathcal M$ such that for any $f\in H^{k-\frac32}({\partial\mathcal M})$ satisfying $f<\rho$, there is a scalar-flat, asymptotically flat metric $\tilde{g}$, conformal to $g$, whose boundary mean curvature is $f$.

\end{corollary}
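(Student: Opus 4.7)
The plan is to combine Proposition \ref{propyamabe2} with the reduction step already invoked in the preamble to this section, namely Maxwell's construction of scalar-flat asymptotically flat representatives with minimal boundary. Since $Q(\mathcal M,g)>0$ (the standing assumption in this section), Maxwell's result supplies a conformal rescaling of $g$ that is scalar-flat with $H\equiv 0$; replacing $g$ with this representative we may assume without loss of generality that $R\equiv 0$ and $H\equiv 0$, so that the mean curvature prescription problem reduces exactly to (\ref{mainproblem}).

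Next I would observe that the exponent appearing in (\ref{mainproblem}) is $\beta = n/(n-2)$, which satisfies $\beta>1$ for all $n\geq 3$; this places us in the regime of Proposition \ref{propyamabe2} where the supersolution construction (and hence the positive threshold function) is available. That proposition, applied to (\ref{generalproblem}), which is (\ref{mainproblem}) after absorbing the constant $\tfrac{n-2}{n}$ into the prescribed function, produces a positive $\rho_0$ on $\partial\mathcal M$ and, for any datum lying strictly below $\rho_0$, a positive $u$ in the appropriate weighted Sobolev space solving (\ref{mainproblem}). Setting $\rho:=\tfrac{n}{n-2}\rho_0$ then undoes the absorbed constant and gives the threshold named in the corollary.

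It remains to verify that $\tilde g := u^{4/(n-2)} g$ has the advertised properties. The conformal scalar curvature identity with $R\equiv 0$ and $\Delta_g u = 0$ gives $\tilde R\equiv 0$; the conformal mean curvature identity displayed at the start of this section, combined with $H\equiv 0$ and the boundary equation satisfied by $u$, yields $\tilde H = f$. Asymptotic flatness of $\tilde g$ follows from the decay of $u-1$: the subsolution and supersolution of Proposition \ref{propyamabe2} both take the form $1 + (\alpha-1)v$ with $v = O(r^{2-n})$, so the sandwich $u_-\leq u \leq u_+$ (together with the regularity built into the sub/supersolution theorem) forces $u-1$ to lie in the weighted space controlling the asymptotics at the required rate $5/2-n$.

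There is no genuine new obstacle to confront here --- the analytic content, namely the sub/supersolution construction with the explicit threshold $\rho$, has already been carried out in Proposition \ref{propyamabe2}, and Maxwell's reduction to the normalized case $R\equiv H\equiv 0$ is the only other nontrivial input. The remainder is bookkeeping with the conformal transformation laws and the weighted function spaces, which is why the corollary is stated as immediate.
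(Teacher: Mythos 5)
Your proposal is correct and follows exactly the route the paper intends: the paper treats Corollary \ref{cor2} as immediate from Proposition \ref{propyamabe2} after Maxwell's reduction to the normalized case $R\equiv H\equiv 0$, the observation that $\beta=n/(n-2)>1$, and the conformal transformation laws, which is precisely what you spell out (including the rescaling of the threshold to undo the absorbed constant and the decay of $u-1$ from the sub/supersolution sandwich).
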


\section{Acknowledgements}
The author would like to thank the Institut Henri Poincar\'e, for their hospitality while part of this work was completed, and gratefully acknowledge that this work was supported by a UNE Research Seed Grant.

\bibliographystyle{abbrv}
\bibliography{../../../refsnew}

\begin{thebibliography}{10}

\bibitem{aleksandrov1962uniqueness}
A.~D. Aleksandrov.
\newblock Uniqueness theorems for surfaces in the large. {I}.
\newblock {\em Amer. Math. Soc. Transl.(2)}, 21:341--354, 1962.

\bibitem{AF}
R.~Bartnik.
\newblock The mass of an asymptotically flat manifold.
\newblock {\em Comm. Pure. Appl. Math.}, 39:661--693, 1986.

\bibitem{qlm}
R.~Bartnik.
\newblock New definition of quasilocal mass.
\newblock {\em Phys. Rev. Lett.}, 62(20):845--885, 1989.

\bibitem{BaIsConstraints2004}
R.~Bartnik and J.~Isenberg.
\newblock The constraint equations.
\newblock In P.~Chru\'sciel and H.~Friedrich, editors, {\em The {E}instein
  Equations and the Large Scale Behavior of Gravitational Fields}, pages 1--38.
  Birkh\"auser Basel, 2004.

\bibitem{brendle2010recent}
S.~Brendle and F.~C. Marques.
\newblock Recent progress on the {Y}amabe problem.
\newblock {\em arXiv preprint arXiv:1010.4960}, 2010.

\bibitem{cantoryorkdata1979}
M.~Cantor.
\newblock A necessary and sufficient condition for {Y}ork data to specify an
  asymptotically flat spacetime.
\newblock {\em Journal of Mathematical Physics}, 20(8):1741--1744, 1979.

\bibitem{cantor2}
M.~Cantor.
\newblock Some problems of global analysis on asymptotically simple manifolds.
\newblock {\em Compositio Mathematica}, 38(1):3--35, 1979.

\bibitem{CantorBrill1981}
M.~Cantor and D.~Brill.
\newblock The laplacian on asymptotically flat manifolds and the specification
  of scalar curvature.
\newblock {\em Compositio Mathematica}, 43(3):317--330, 1981.

\bibitem{escobar1992conformal}
J.~F. Escobar.
\newblock Conformal deformation of a {R}iemannian metric to a scalar flat
  metric with constant mean curvature on the boundary.
\newblock {\em Annals of Mathematics}, pages 1--50, 1992.

\bibitem{escobar1992yamabe}
J.~F. Escobar.
\newblock The {Y}amabe problem on manifolds with boundary.
\newblock {\em Journal of Differential Geometry}, 35(1):21--84, 1992.

\bibitem{EscobarprescribedH}
J.~F. Escobar.
\newblock Conformal metrics with prescribed mean curvature on the boundary.
\newblock {\em Calculus of Variations and Partial Differential Equations},
  4(6):559--592, 1996.

\bibitem{kazdanwarner}
J.~L. Kazdan and F.~W. Warner.
\newblock Scalar curvature and conformal deformation of riemannian structure.
\newblock {\em J. Differential Geom.}, 10(1):113--134, 1975.

\bibitem{yamabe}
J.~Lee and T.~Parker.
\newblock The {Y}amabe problem.
\newblock {\em Bulletin (New Series) of the American Mathematical Society},
  17(1):37--91, 1987.

\bibitem{maxwell2005solutions}
D.~Maxwell.
\newblock Solutions of the {E}instein constraint equations with apparent
  horizon boundaries.
\newblock {\em Comm. Math. Phys.}, 253(3):561--583, 2005.

\bibitem{McCormickExtensionsManifold}
S.~McCormick.
\newblock The hilbert manifold of asymptotically flat metric extensions.
\newblock {\em Preprint arXiv:1512.02331}, 2015.

\bibitem{schwartz2006zero}
F.~Schwartz.
\newblock The zero scalar curvature {Y}amabe problem on noncompact manifolds
  with boundary.
\newblock {\em Indiana University mathematics journal}, 55(4):1449--1459, 2006.

\bibitem{QLMreview}
L.~B. Szabados.
\newblock Quasi-local energy-momentum and angular momentum in general
  relativity: {A} review article.
\newblock {\em Living Rev. Relativity}, 7(4), 2004.

\bibitem{trudinger1968remarks}
N.~S. Trudinger.
\newblock Remarks concerning the conformal deformation of {R}iemannian
  structures on compact manifolds.
\newblock {\em Annali della Scuola Normale Superiore di Pisa-Classe di
  Scienze}, 22(2):265--274, 1968.

\bibitem{yamabe1960deformation}
H.~Yamabe.
\newblock On a deformation of {R}iemannian structures on compact manifolds.
\newblock {\em Osaka Mathematical Journal}, 12(1):21--37, 1960.

\end{thebibliography}

\end{document}